\newtheorem{propo}{Proposition}
\DeclareMathOperator*{\argmin}{arg\,min}
\DeclareMathOperator*{\argmax}{arg\,max}
\DeclarePairedDelimiter{\ceil}{\lceil}{\rceil}
\DeclarePairedDelimiter{\floor}{\lfloor}{\rfloor}
\begin{document}

\title{Interpolation in the Presence of Domain Inhomogeneity}

\author{Hamid~Behjat, Zafer~Do\u{g}an, Dimitri~Van~De~Ville, and Leif~S\"ornmo
\thanks{
This work was supported by the Swedish Research Council under Grant 2009-4584, and in part by the Swiss National Science Foundation under grant P2ELP2-165160.
}
\thanks{
H. Behjat and L. S\"ornmo are with the Department of Biomedical Engineering, Lund University, Sweden (e-mail: hamid.behjat@bme.lth.se; leif.sornmo@bme.lth.se).
Z. Do\u{g}an is with the School of Engineering and Applied Sciences, Harvard University, USA (e-mail: zaferdogan@seas.harvard.edu ).
D. Van De Ville is with the \'Ecole Polytechnique F\'ed\'erale de Lausanne, Switzerland (e-mail: dimitri.vandeville@epfl.ch) and the Department of Radiology and Medical Informatics, University of Geneva.
}}


\maketitle

\begin{abstract}
Standard interpolation techniques are implicitly based on the assumption that the signal lies on a homogeneous domain. In this letter, the proposed interpolation method instead exploits prior information about domain inhomogeneity, characterized by different, potentially overlapping, subdomains. By introducing a domain-similarity metric for each sample, the interpolation process is then based on a domain-informed consistency principle. We illustrate and demonstrate the feasibility of domain-informed linear interpolation in 1D, and also, on a real fMRI image in 2D. The results show the benefit of incorporating domain knowledge so that, for example, sharp domain boundaries can be recovered by the interpolation, if such information is available. 
\end{abstract}

\begin{IEEEkeywords}
Sampling, interpolation, context-based interpolation, B-splines.
\end{IEEEkeywords}

\IEEEpeerreviewmaketitle

\section{Introduction}
\IEEEPARstart{I}{nterpolation} has been studied extensively in various settings. The main frameworks are based on concepts such as smoothness for spline-generating spaces~\cite{Unser1999}, underlying Gaussian distributions for ``kriging''~\cite{Kriging1990}, and spatial relationship for inverse-distance-weighted interpolation~\cite{Lu2008}. Yet, while advanced concepts have been developed for describing these signal spaces, the underlying domain is always assumed to be homogeneous. In a sub-category of super-resolution image processing techniques \cite{Park2003}, such as in \cite{Alipour2010,  Cordero2012, Neubert2012, Manjon2010b, Rousseau2010, Jafari2014, JafariKhouzani2014,Rueda2013}, the interpolation phase is adapted based on the context of the signal; 
such adaptation schemes are based on the characteristics of either the image itself \cite{Alipour2010, Cordero2012, Neubert2012} or another high resolution image that is of the same nature as that of the low resolution image to be interpolated \cite{Rousseau2010, Manjon2010b, Jafari2014, JafariKhouzani2014,Rueda2013}. Here, we consider a different scenario in which signals are sampled over a known inhomogeneous domain; i.e., a domain characterized by a set of subdomains, that can be overlapping, available as supplementary data. This supplementary information is of a completely different nature than that of the samples to be interpolated; it describes the signal domain, rather than the signal itself.
 

\subsection*{Problem Formulation}
Assume that the following set of information is given:
\begin{enumerate}
\item
A sequence of samples $s[k]$ obtained as 
\begin{align}
\label{eq:samples}
s[k]  & = \left \langle s(x), \delta(x-k) \right  \rangle,\quad \text{for all }k \in \mathbb{Z},
\end{align}
where $s(\cdot) \in {L}_{2}$ (denoting the Hilbert space of all continuous, real-valued functions that are square integrable in Lebesgue sense) and $s[\cdot] \in \ell_{2}$ (denoting the Hilbert space of all discrete signals that are square summable).

\item Domain knowledge\footnote{We assume that every point belongs to at least one subdomain, and that the domain information can be specified  by a continuous function.
} described by J different subdomain indicator functions $\tilde{d}_j(x)$, $j=1,\ldots,J$. We then introduce the normalized subdomain functions as
\begin{equation}
\label{eq:constraintsPofUnity}
   d_j(x) = \frac{\tilde{d}_j(x)}{\sum_{l=1}^J \tilde{d}_{l} (x)}, \quad \text{such that } \sum_{j=1}^J d_j(x) = 1.
\end{equation}
Using $\tilde{d}_j(x)$, $j=1,\ldots,J$, space-dependent index sets of maximal and minimal association to the underlying subdomains can be derived as
\begin{align}
\label{eq:indSetH}
\mathcal{H}(x) & = \{ i | d_i(x) = \max_{j} d_j(x) \},\quad \text{for all } x \in \mathbb{R}, \\
\label{eq:indSetL}
\mathcal{L}(x) & =  \{ i | d_i(x) = \min_{j} d_j(x) \},\quad \text{for all } x \in \mathbb{R}.
\end{align}
\end{enumerate}

Given prior knowledge on domain inhomogeneity under the form (\ref{eq:constraintsPofUnity})--(\ref{eq:indSetL}), the objective is to adapt conventional interpolation methods such that they accommodate this information. To reach this objective, we start from shift-invariant generating kernels such as splines \cite{DeBoorBook,Unser1999,Unser2000B}, and then transform them into shift-variant kernels based on the domain knowledge. 



\subsection*{Potential Application Areas} 
In brain studies using functional magnetic resonance imaging (fMRI), a sequence of whole-brain functional data is acquired at relatively low spatial resolution. The data is commonly accompanied with a three to four fold higher resolution anatomical MRI scan, which provides information about the convoluted brain tissue delineating gray matter (GM) and white matter (WM), each of which have different functional properties \cite{Logothetis}, and cerebrospinal fluid (CSF); the topology also varies across subjects \cite{Mangin,deSchotten2011}. Hence, the goal is to exploit the richness of anatomical data to improve the quality of interpolation of fMRI data\cite{Brett2001, Andrade2001, Ashburner2007}, in the same spirit as approaches that aim to enhance denoising \cite{Kiebel2000, Behjat2014, Behjat2015} and decomposition \cite{Behjat2016} of fMRI data using anatomical data.



Earth sciences is another potential application area, where a spatially continuous representation of parameters, such as precipitation, land vegetation and atmospheric methane is desired to be computed from a discrete set of rain gauge measurements \cite{Borga1997,Lu2008}, fossil pollen measurements \cite{Gaillard2010, Pirzamanbein2014, Trondman2015} and satellite estimates of methane \cite{Frankenberg2005,Keppler2006}, respectively. In these scenarios, the well-defined geographical structure of the earth, anthropogenic land-cover models \cite{Kaplan2009} and geophysical models of the earth's surrounding atmosphere may be exploited as descriptors of the inhomogeneous domain to improve standard approaches to interpolation.

The remainder of this letter is organized as follows. In Section~II, the basis for obtaining a domain-informed interpolated signal is formulated. In Section~III, as a proof-of-concept, standard linear interpolation is extended to the proposed domain-informed setting, and an illustrative example is presented. In Section~IV, the proposed interpolation scheme is applied to a real fMRI image.

\section{Theory for Domain-Informed Interpolation}

The proposed domain-informed interpolation scheme is based on two fundamental concepts: (i)~deriving a domain-informed shift-variant basis, based on a shift-invariant basis of the integer shifts of a generating function $\varphi$; (ii)~fulfilling the ``domain-informed consistency principle''---a principle that we define as an extension of the consistency principle~\cite{Unser1994}.

\subsection{Domain-Informed Shift-Variant Basis}
\label{sec:disvb}
Consider a compactly-supported generator $\varphi(x)$ (i.e., $\varphi(x)=0$, $\forall |x| \ge \: \Delta^{(\varphi)} \in \mathbb{R}^{+}$) of a shift-invariant space 
\begin{equation}
\label{eq:representation0}
\mathcal{V}(\varphi) = \bigg\{ \hat{s}(x) = \sum_{k\in \mathbb{Z}} {c}[k] \cdot \varphi(x-k) : {c}[\cdot] \in \ell_{2} \bigg\},
\end{equation}
where $c[\cdot]$ are the weights of the integer-shifted basis functions. The generating function $\varphi(x)$ can be any of compact-support kernels used in standard interpolation. In the presence of domain inhomogeneity, the idea is to transform $\varphi(\cdot-k)$ into $\varphi_{\xi_k}(\cdot-k)$: a modulated version of $\varphi(\cdot-k)$ that is defined based on a domain similarity metric $\xi_{k}$ that describes the domain in the adjacency of $k$. With this construction, a shift-variant space
\begin{equation}
\label{eq:representation}
\mathcal{V}_\xi(\varphi) = \bigg\{ \hat{s}(x) = \sum_{k\in \mathbb{Z}} {c}[k] \cdot \varphi_{\xi_k}(x-k) : {c}[\cdot] \in \ell_{2} \bigg\},
\end{equation}
is obtained. We propose the following definition of $\xi_k$, which will subsequently guide the design of $\varphi_{\xi_k}$.

\subsubsection*{Definition (Domain Similarity Metric)}
\label{def:2}
Given a description of an inhomogeneous domain under the form (\ref{eq:constraintsPofUnity})--(\ref{eq:indSetL}), a domain similarity metric $ \xi_{k}(x) \in [0,1]$ can be defined in the $\Delta$ neighbourhood of each $k \in \mathbb{Z}$ as 
\begin{equation}
\label{eq:d}
\xi_{k}(x)= 
\begin{cases}
W_{k,x} \: \mathcal{S}\left (d_{h_{k}}(k+x) - 0.5 \right ), &|x| < \Delta, |\mathcal{H}(k)|=1, 
\\
W_{k,x} \: \mathcal{S}\left (d_{l_{k+x}}(k+x) - 0.5 \right ), & |x| < \Delta, |\mathcal{H}(k)|>1, 
\\
0, &|x| \ge \Delta,
\end{cases}
\end{equation}
where $|\mathcal{H}(.)|$ denotes the cardinality of set $\mathcal{H}(.)$, $\mathcal{S}(\cdot)$ denotes the logistic function, $ \mathcal{S}(n) = (1+e ^{-\gamma \: n} )^{-1} \in [0,1]$ with $\gamma >0$, $h_{.} \in \mathcal{H}(\cdot)$, $l_{.} \in \mathcal{L}(\cdot)$, and $W_{k,x}$ denotes a weight factor
\begin{equation}
\label{eq:weight}
W_{k,x} = 
1- \frac{1}{J} \sum_{j=1}^J \left | d_j(k+x) - d_j(k) \right |, 
\end{equation}
\noindent
which increases the adaptation to domain knowledge. For minimal adaptation, $W_{k,x}$ can be set to 1.

In (\ref{eq:d}), if $k \in \mathbb{Z}$ and $(k+x) \in \mathbb{R}$ are (i) maximally associated to the same subdomain, and (ii) the maximal association of $(k+x) \in \mathbb{R}$ has a probability greater than 0.5, $\xi_{k}(x) \rightarrow 1$, otherwise, $\xi_{k}(x) \rightarrow 0$. The parameter $\gamma$ of $\mathcal{S}(\cdot)$ tunes both the smoothness and strength of this adaptation; a greater $\gamma$, up to a suitable degree, leads to a stronger as well as smoother adaption to changes in domain inhomogeneity.

The metrics $\{\xi_{k}(x)\}_{k\in\mathbb{Z}}$ are defined as local functions in the neighbourhood of each  $k\in \mathbb{Z}$. On the global domain support, a domain similarity function, denoted  
$$
\rho(x,k): x \in \mathbb{R} \setminus \mathbb{Z}, k \in \mathcal{K}_{x}^{(\Delta)}:\{k \in \mathbb{Z}  \mid |x-k| < \Delta\} \rightarrow \llbracket 0,1 \rrbracket,
$$
can be defined as
\begin{equation}
\label{eq:S}
\rho(x,k) = \frac{\xi_{k}(x-k)}{\sum_{k'\in \mathcal{K}_{x}^{(\Delta)}} \xi_{k'}(x-k')},
\end{equation}
which satisfies the following three properties:
\begin{enumerate}
\item 
$\rho(x,m) = \rho(x,n)$ implies perfect similarity of the domains at $x$, $m$ and $n$. 
\item 
$\rho(x,m) > \rho(x,n)$ implies greater similarity of the domains at $x$ and $m$ than the similarity of the domains at $x$ and $n$, and vice versa.
\item for $x\in \mathbb{R}\setminus \mathbb{Z}$, we have $\sum_{k\in \mathbb{Z}} \rho(x,k) = 1$. See Appendix I for the proof. 
\end{enumerate}

\subsection{DICP: Domain-Informed Consistency Principle}
\label{sec:dicp}
There are different ways to define $\varphi_{\xi_k}$. In this letter, we consider the construction of a particular class of domain-informed, shift-variant basis that leads to interpolation satisfying the following principle.

\subsubsection*{Definition (Domain-Informed Consistency Principle)}
Given a sequence of samples, as in (\ref{eq:samples}), and a properly defined domain similarity function, as in (\ref{eq:S}), the interpolated signal $\hat{s}(\cdot)$ must satisfy the following conditions: 
\begin{enumerate}[(i)]
\item perfect fit at integers; i.e. $\hat{s}(k) = s[k],\quad \text{for all } k\in \mathbb{Z}$.
 
\item 
for any $x \in \{\mathbb{R} \setminus \mathbb{Z}\}$ and the set $\mathcal{K}_{x}^{(\Delta)}=\{ k\in \mathbb{Z}  \mid |x-k| < \Delta\}$, if for all $m,n\in \mathcal{K}_x^{(\Delta)}$ we have $\rho(x,m) \ne \rho(x,n)$,  
\begin{align}
\label{eq:criterion2}
\argmax_{k \in \mathcal{K}_{x}^{(\Delta)}} \Big \{ \rho(x,k) \Big \} 
= 
\argmin_{k \in \mathcal{K}_{x}^{(\Delta)}}  \Big \{ \big | \hat{s}(x) - s[k] \big | \Big \}.
\end{align}
\end{enumerate}

Criterion (i) is the consistency principle \cite{Unser1994}. Criterion (ii) is based on the assumption that the underlying signal $s(x)$, at any position $x \in \mathbb{R}\setminus \mathbb{Z}$, is more likely to be similar to those sample in its $\Delta$ neighbourhood that have a ``similar domain'' as $x$. As such, the DICP ensures that the interpolated signal is consistent not only at the given sample points, but also at intermediate points between samples. 
  
\section{DILI: Domain-Informed Linear Interpolation}
We propose a specific scheme to adapt standard linear interpolation (SLI) to incorporate domain knowledge. A definition of a shift-variant basis for this particular setting is presented, such that the interpolated signal is ensured to satisfy the DICP. In particular, the basis is a domain-informed version of the linear B-spline basis function~\cite{Unser1999}, the ``hat'' function with support $2$ ($
\Delta=1$) defined as 
\begin{equation}
\label{eq:firstDegSpline}
\Lambda(x) = 
\begin{cases}
1- |x|, & |x|<1
\\
0, & |x| \ge 1.
\end{cases}
\end{equation}  

\begin{propo} 
\label{propo:propo1} (Domain-Informed Linear Interpolation)
\noindent 
For a given set of samples (\ref{eq:samples}) and constraints (\ref{eq:constraintsPofUnity}), the domain-informed linear interpolated signal satisfying the DICP 
is given by
\begin{equation}
\label{eq:constrainedSplineInterp}
\forall x \in \mathbb{R}, \quad \hat{s}(x) = \sum_{k \in \mathbb{Z}} s[k]  \varphi_{\xi_k}(x-k),
\end{equation}
where 
\begin{align}
\label{eq:constrainedSpline}
\varphi_{\xi_k}(x) = 
\begin{cases}
0  & |x| \ge 1
\\
\Lambda(x) & |x| < 1, \: D(x_k) = 0, \: x_k \notin \mathcal{D}
\\
\tilde{\xi}_k(x) & \text{otherwise},
\end{cases}
\end{align}
where $x_k = (k+x) \in \mathbb{R}$, and 
\begin{align}
&\tilde{\xi}_k(x) = \frac{\xi_k(x)}{\xi_{\floor{x_k}}(x)+\xi_{\ceil{x_k}}(x)}, \text{ for }|x| < 1,
\label{eq:normDSM}
\\
&D(x) = \tilde{\xi}_{\floor{x}}(x-\floor{x}) -\tilde{\xi}_{\ceil{x}}(x-\ceil{x}), \text{ for } x\in\mathbb{R},
\nonumber
\\ 
& \mathcal{D} = \Big \{ x \in \mathbb{R} \: \big | \:  D(x) = 0, \: \lim_{\varepsilon \rightarrow 0} \int_{x-\varepsilon}^{x+\varepsilon} D(x)  d x = 0 \Big \},
\nonumber
\end{align}
\end{propo}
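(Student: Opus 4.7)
\emph{Proof sketch.} The plan is to verify each of the two criteria of the DICP directly from the definition of $\varphi_{\xi_k}$. By Case~1 of \eqref{eq:constrainedSpline} the basis functions have support in $[-1,1]$, so the series in \eqref{eq:constrainedSplineInterp} reduces everywhere to at most two non-zero terms: for $x\in(k_0,k_0+1)$ only the indices $k=k_0$ and $k=k_0+1$ contribute, and for $x=k_0\in\mathbb{Z}$ only $k=k_0$ contributes (the neighbours vanish at $\mp 1$). The key preliminary observation is that when $x\in(k_0,k_0+1)$, the selector $x_k=k+\text{arg}$ inside $\varphi_{\xi_k}$ returns the same absolute point $x$ for both summands, so the Case~2/Case~3 dichotomy is triggered simultaneously in both.

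First I would handle Criterion~(i). At $x=k_0$, $x_k=k_0$ is an integer, whence $\lfloor x_k\rfloor=\lceil x_k\rceil=k_0$ and $D(k_0)=0$ trivially; since this is an isolated zero of $D$, we have $k_0\notin\mathcal{D}$, so Case~2 applies and $\varphi_{\xi_{k_0}}(0)=\Lambda(0)=1$, giving $\hat{s}(k_0)=s[k_0]$. Next I would turn to Criterion~(ii) and fix $x\in(k_0,k_0+1)$, writing $y=x-k_0\in(0,1)$. If Case~2 is active for both summands then $\varphi_{\xi_{k_0}}(y)+\varphi_{\xi_{k_0+1}}(y-1)=(1-y)+y=1$ and $\hat{s}(x)$ is the standard linear interpolant; moreover $D(x)=0$ forces $\tilde{\xi}_{k_0}(y)=\tilde{\xi}_{k_0+1}(y-1)$, i.e.\ $\rho(x,k_0)=\rho(x,k_0+1)$, so the hypothesis of Criterion~(ii) is vacuous. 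In Case~3 I would show, using the definition of $\tilde{\xi}_k$ in \eqref{eq:normDSM} together with \eqref{eq:S}, that $\varphi_{\xi_{k_0}}(y)=\rho(x,k_0)$ and $\varphi_{\xi_{k_0+1}}(y-1)=\rho(x,k_0+1)$, which by Property~3 of $\rho$ sum to $1$. Thus $\hat{s}(x)$ is a convex combination $\rho(x,k_0)\,s[k_0]+\rho(x,k_0+1)\,s[k_0+1]$.

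The final step is a short calculation on that convex combination: assuming without loss of generality $\rho(x,k_0)>\rho(x,k_0+1)$, I would use $\rho(x,k_0)+\rho(x,k_0+1)=1$ to obtain $|\hat{s}(x)-s[k_0]|=\rho(x,k_0+1)\,|s[k_0+1]-s[k_0]|$ and $|\hat{s}(x)-s[k_0+1]|=\rho(x,k_0)\,|s[k_0+1]-s[k_0]|$, whence $\argmax_k\rho(x,k)=k_0=\argmin_k|\hat{s}(x)-s[k]|$ as required. The main obstacle is the second paragraph: one must track the floor/ceiling bookkeeping inside $\tilde{\xi}_{k_0}(y)$ and $\tilde{\xi}_{k_0+1}(y-1)$ carefully enough to conclude that their denominators in \eqref{eq:normDSM} match the common normalising sum in \eqref{eq:S}, and must handle the auxiliary set $\mathcal{D}$ so that the Case~2/Case~3 dichotomy is consistent across the two contributing basis functions. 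Once that partition-of-unity matching is in place, the rest of the argument is an elementary identity on convex combinations.
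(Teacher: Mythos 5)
Your proposal is correct and follows essentially the same route as the paper's Appendix~II: verify condition~(i) by direct evaluation at integers, and verify condition~(ii) by showing that the two contributing basis functions form a partition of unity (identifying $\varphi_{\xi_k}(x-k)$ with $\rho(x,k)$, which is exactly the paper's step $\rho(x,\floor{x})>\rho(x,\ceil{x})\Rightarrow\varphi_{\xi,\floor{x}}(x^{+})>\varphi_{\xi,\ceil{x}}(x^{-})$) and then computing $|\hat{s}(x)-s[k]|$ as a convex-combination identity. Your explicit handling of the Case~2/Case~3 dichotomy --- in particular noting that $D(x)=0$ makes the hypothesis of criterion~(ii) vacuous --- is a detail the paper leaves implicit, but it does not change the structure of the argument.
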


\begin{proof}
See Appendix II.
\end{proof}

\noindent
DILI has the property that in any domain interval $\llbracket \alpha,\beta \rrbracket$, $\alpha,\beta \in \mathbb{R}$, that is either homogeneous, i.e.,  
$$\forall x \in \llbracket \alpha,\beta \rrbracket : d_{l}(x) = 1, \: \: d_{j}(x) = 0, \: \: j \in \{\{1,\ldots,J\} - l\},$$
or uniformly inhomogeneous, i.e., 
$$
\forall x \in \llbracket \alpha,\beta \rrbracket :  d_{1}(x) = d_{2}(x) = \cdots = d_{J}(x),
$$
it exploits basis functions that are identical to those used in SLI, i.e., $ \forall  x \in \llbracket \alpha,\beta \rrbracket, k \in \mathbb{Z} :  \varphi_{k}(x) = \Lambda(x-k)$; thus, the DILI and SLI signals are identical within the interval $\llbracket \alpha,\beta \rrbracket$.

\begin{figure}[]
\centering
\includegraphics[width=0.77\textwidth]{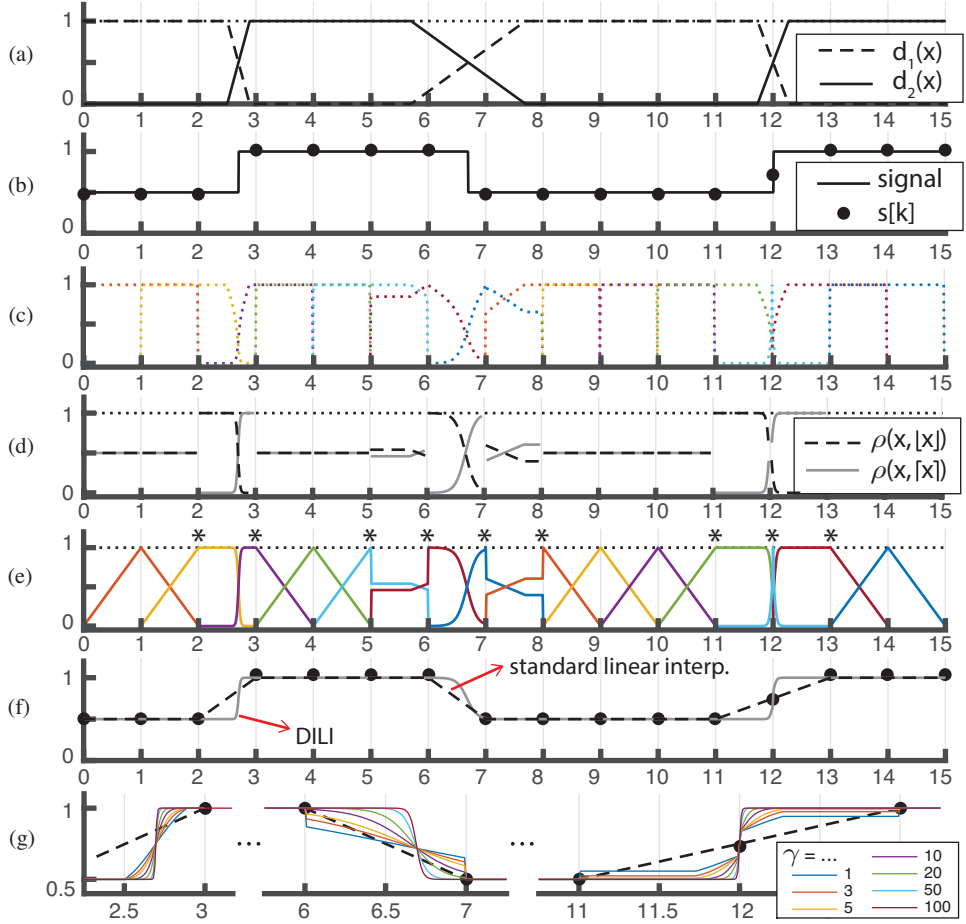}
\caption{
DILI. 
(a) Signal domain. (b) Signal samples. 
(c) $\{\xi_{k}(x)\}_{k=1,\ldots14}$, for $|x|<=1$. 
(d) $\{\rho(x,k)\}_{k \in \{\floor{x},\ceil{x}\}}$, for $\Delta=1$. 
(e) $\{\varphi_{\xi_k}(x)\}_{k=1,\ldots,14}$. 
In (a), (d)-(e), the black dotted lines show partition of unity constraints. (f) DILI vs. SLI. (g) DILI using a range of different $\gamma$. The dashed line corresponds to SLI as shown in (f).
\vspace{-3mm}
}
\label{fig:results}
\end{figure}

An example setting for constructing DILI is presented in Fig.~\ref{fig:results}. The signal domain consists of two subdomains, see Fig.~\ref{fig:results}(a), that satisfy (\ref{eq:constraintsPofUnity}). The domain has several homogeneous intervals, such as $\llbracket 0,2.5\rrbracket$ or $\llbracket 8,11\rrbracket$, as well as inhomogeneous intervals. In particular, three types of inhomogeneous domain intervals are observed at the transition between the two subdomains: a fast transition that falls between two samples, i.e., interval $\llbracket 2,3\rrbracket$, a slow transition, i.e. interval $\llbracket 5,8\rrbracket$, and a transition that occurs symmetrically around a sample point, $k=12$. The signal samples and the underlying continuous signal are displayed in Fig.~\ref{fig:results}(b). 

Fig.~\ref{fig:results}(c) illustrates the set of domain similarity metrics $\{\xi_{k}(x)\}_{k\in \mathbb{Z}}$, in $\Delta=1$ neighbourhood of each $k\in Z$. Fig.~\ref{fig:results}(d) illustrates the corresponding domain similarity function $\{\rho(x,k)\}_{k \in \mathcal{K}_{x}^{(1)}}$, defined over the global support; the function is displayed in two parts, defining the left-hand and right-hand local neighbourhood of each $k\in \mathbb{Z}$. Fig.~\ref{fig:results}(e) illustrates the resulting set of domain-informed splines, cf. (\ref{eq:constrainedSpline}); only those that reside in the adjacency of the domain transition boundaries, marked with asterisks, deviate from $\Lambda(x)$. A greater number deviate from their standard counterpart at the slower subdomain transition interval. As sample $s[12]$ lies at the exact intersection of the two subdomains, $\xi_{12}(x)$ as well as $\varphi_{k}(x)$ are strongly suppressed; i.e., $s[12]$ will have minimal effect in the interpolated values in its neighbourhood. 

The resulting SLI and DILI are shown in Fig.~\ref{fig:results}(f). SLI and DILI are identical in regions associated to only a single subdomain. However, DILI better matches the underlying signal in the subdomain transition bands, satisfying the DICP. In Fig.~\ref{fig:results}(c)-(f), the logistic function $\mathcal{S}(\cdot)$ in (\ref{eq:d}) was used with parameter $\gamma=20$. Fig.~\ref{fig:results}(g) illustrates the effect of varying $\gamma$; the DILI signals are illustrated only in the adjacency of the three subdomain transition regions, since they are, elsewhere, identical to SLI. In essence, $\gamma$ determines the strength of associating the point to be interpolated, to the similarity of its domain and that of nearby samples; a greater $\gamma$ results in a greater strength of this relationship. 

\begin{figure}[]
\centering
\includegraphics[width=0.73\textwidth]{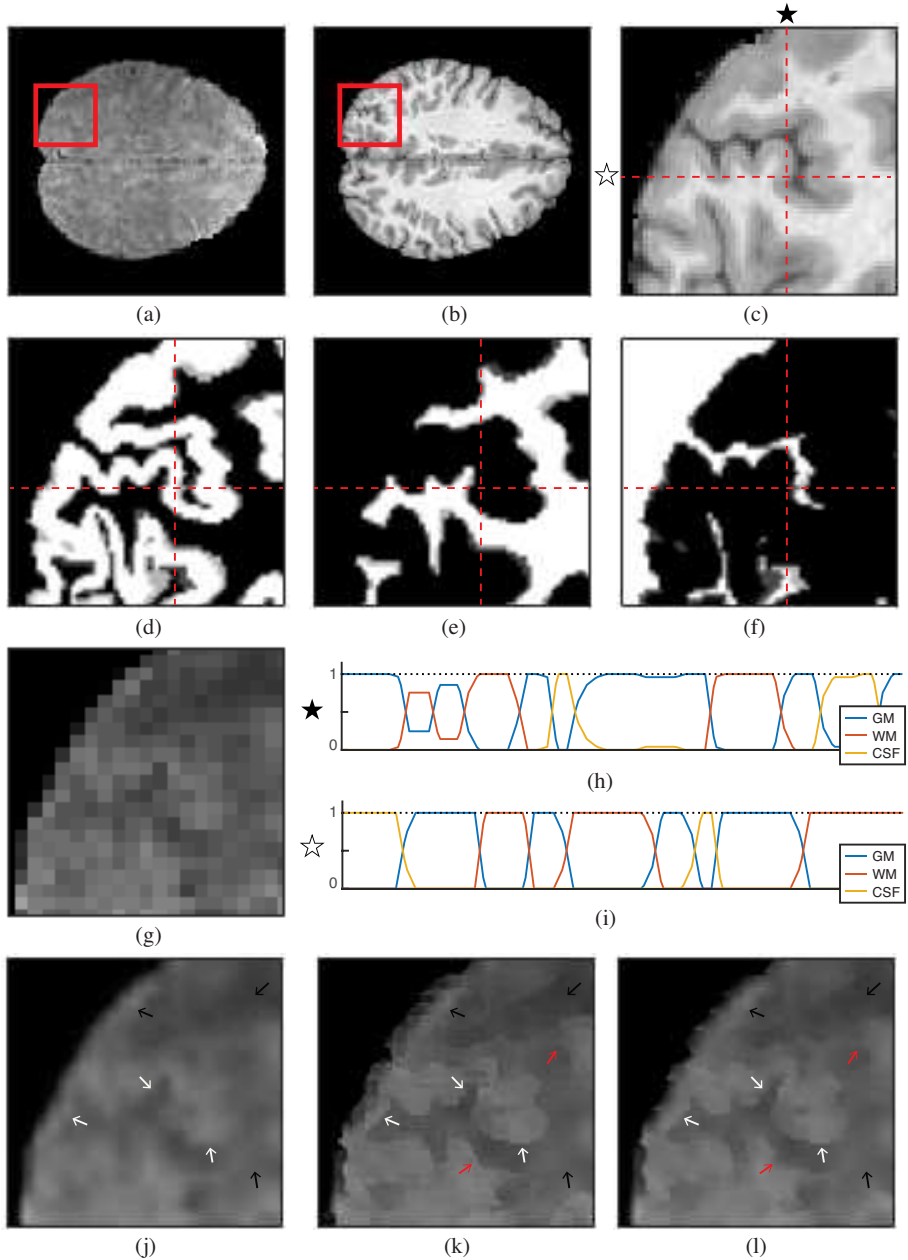}
\caption{DIBLI. (a) A 2D slice of an fMRI volume, (b) The structural scan. (c) Close-up of an ROI. (d) GM, (e) WM, and (f) CSF of the ROI. (g) fMRI data in the ROI. (h)-(i) Column and row domain data for the marked position in (c). (j) SBLI. (k) DIBLI with $W_{k,x}$ as in (8). (l) DIBLI with $W_{k,x}=1$. 
\vspace{-3mm}
}
\label{fig:dibli}
\end{figure}

\section{DIBLI: Domain-Informed Bilinear Interpolation}
\label{sec:dibli}
Domain-informed bilinear interpolation (DIBLI), can be obtained as a direct, separable extension of DILI to 2-D space. Fig.~\ref{fig:dibli} illustrates the setting for applying DIBLI on a 2-D slice of an fMRI volume, see Fig.~\ref{fig:dibli}(a), that accompanies a 3-fold higher resolution structural scan, see Fig.~\ref{fig:dibli}(b). Segmenting the structural scan, one obtains GM, WM, and CSF probability maps, see Figs.~\ref{fig:dibli}(d)-(f); these maps can be treated as normalized subdomain functions that satisfy (\ref{eq:constraintsPofUnity}) across any column/row in the plane, see Figs.~\ref{fig:dibli}(h)-(i). Fig.~\ref{fig:dibli}(j) shows standard bilinear interpolation (SBLI) of the functional pixels shown in Fig.~\ref{fig:dibli}(g). Figs.~\ref{fig:dibli}(k)-(l) show two versions of DIBLI, the former with maximal and the later with minimal adaptation to domain knowledge. SBLI and both DIBLI versions are identical at homogeneous parts of the domain (see black arrows), whereas at the inhomogeneous parts (see white arrows), both DIBLI versions present finer details. 
DIBLI with maximal adaptation provides further details over the minimal adapted version at some parts (see red arrows). Overall, DIBLI with minimal adaptation, cf. Fig.~\ref{fig:dibli}(l), may seem more visually appealing than Fig.~\ref{fig:dibli}(k), and yet, 
it presents significant subtle details that are missing in SBLI.

\section{Conclusion and Outlook}
We have proposed an interpolation scheme that incorporates a-priori knowledge of the signal domain, such that the interpolated signal is consistent not only at sample points, with respect to the given samples, but also at intermediate points between samples, with respect to the given domain knowledge. As a proof-of-concept, domain-informed linear interpolation has been presented as an extension of standard linear interpolation. Interpolation approaches that use higher order B-splines may also be extended based on the proposed domain-informed consistency principle, by defining suitable domain similarity metrics matching the support of the generating kernel.    
Results from applying the proposed approach on fMRI data demonstrated its potential to reveal subtle details; our future research will focus on further evaluation of its properties as well as its efficient implementation.

\section*{Appendix I}
Expanding the sum of integer-shifted domain similarity metric functions, $\forall x \in \mathbb{R} / \mathbb{Z}$, we have   
\begin{align}
\label{eq:proof_POU}
\sum_{k \in \mathbb{Z}} \rho(x,k) 
& \stackrel{(\ref{eq:d})}{=} \cdots + 0 + \frac{ \sum_{i\in \mathcal{K}_{x}^{(\Delta)}} \xi_{i}(x-i)}{\sum_{k'\in \mathcal{K}_{x}^{(\Delta)}} \xi_{k'}(x-k')} + 0 + \cdots =1.
\end{align}

\vspace{-7mm}
\section*{Appendix II}
\begin{proof} (Proposition 1) We prove that the proposed interpolation satisfies both conditions of the DICP, cf. Definition~1. 

\subsubsection*{Condition (i)}
Perfect fit at integers is satisfied since    
\begin{align}
\forall k \in \mathbb{Z}, \quad \hat{s}(k)  & = \left \langle \hat{s}(x), \delta(x-k) \right \rangle 
\nonumber
\\
& \stackrel{(\ref{eq:constrainedSplineInterp})}{=} \sum_{n \in \mathbb{Z}} s[k] \underbrace{\left \langle \varphi_{\xi,n}(x-n), \delta(x-k) \right \rangle}_{\varphi_{\xi,n}(k-n)} 
\nonumber
\\
& \: \, = s[k] \underbrace{\varphi_{\xi,k}(0)}_{\stackrel{(\ref{eq:constrainedSpline}),(\ref{eq:d})}{=} 1\: \: } =  s[k].
\end{align}

\subsubsection*{Condition (ii)}
Define $\forall x \in \{\mathbb{R} \setminus \mathbb{Z}\}$, $f_{\floor{x}}(x) = | \hat{s}(x) - s[{\floor{x}}] |$ and $f_{\ceil{x}}(x) = | \hat{s}(x) - s[{\ceil{x}}] |$.
The second condition of the principle, cf. (\ref{eq:criterion2}), is satisfied if it can be shown that:
\begin{align}
\rho(x,\floor{x}) & > \rho(x,\ceil{x}) \rightarrow f_{\floor{x}}(x) < f_{\lceil x \rceil}(x),
\label{eq:toProve1}
\\
\rho(x,\floor{x}) & < \rho(x,\ceil{x}) \rightarrow f_{\floor{x}}(x) > f_{\lceil x \rceil}(x).  
\label{eq:toProve2}
\end{align}
Define $ \forall x \in \mathbb{R}, x^{+} = x-\floor{x}$ and $x^{-} = x-\ceil{x}$. Integer-shifted $\Lambda(x)$ form a partition of unity, $\forall x \in \mathbb{R}$, as 
\begin{align}
\label{eq:proof_POUsplines}
\sum_{k \in \mathbb{Z}} \Lambda(x-k) = \Lambda(x^{+}) + \Lambda(x^{-}) =1,
\end{align}
and so do integer-shifted $\{\tilde{\xi}_{k}(x)\}_{k\in \mathbb{Z}}$, cf.~(\ref{eq:normDSM}), as
\begin{equation}
\label{eq:dsm_POU}
\sum_{k \in \mathbb{Z}} \tilde{\xi}_{k}(x-k) \stackrel{(\ref{eq:proof_POU})}{=} \tilde{\xi}_{\floor{x}}(x^{+}) + \tilde{\xi}_{\ceil{x}}(x^{-}) =1.
\end{equation} 
Thus, integer-shifted, domain-informed first degree spline basis form a partition of unity, since $\forall x \in \mathbb{R}$    
\begin{align}
\label{eq:proof_POUcnstrainedSplines}
\sum_{k \in \mathbb{Z}} \varphi_{\xi,k}(x-k) \stackrel{(\ref{eq:constrainedSpline})}{=} \varphi_{\xi,\floor{x}}(x^{+}) + \varphi_{\xi,\ceil{x}}(x^{-}) \stackrel{(\ref{eq:dsm_POU}),(\ref{eq:proof_POUsplines})}{=} 1.
\end{align}
The function $f_{\floor{x}}(x)$ in (\ref{eq:toProve1}) can be expanded as 
\begin{align}
f_{\floor{x}}(x)
& \stackrel{(\ref{eq:constrainedSplineInterp})}{=} 
\bigg | 
\sum_{k=\floor{x}}^{\ceil{x}} s[k] \varphi_{\xi,k}(x-k) - s[\floor{x}]
\bigg |
\nonumber
\\
& \: =  
\big | 
s[\floor{x}] \underbrace{\left ( \varphi_{\xi,\floor{x}}(x^{+}) -1 \right)}_{\stackrel{(\ref{eq:proof_POUcnstrainedSplines})}{=} - \varphi_{\xi,\ceil{x}}(x^{-})} + s[\ceil{x}] \, \underbrace{\varphi_{\xi,\ceil{x}}(x^{-})}_{ \stackrel{(\ref{eq:constrainedSpline}),(\ref{eq:d})}{\longrightarrow}  \ge 0 \: \quad  \quad}
\big |
\nonumber
\\
& \: =  
\big | 
s[\ceil{x}] - s[\floor{x}] \big | \cdot \varphi_{\xi,\ceil{x}}(x^{-}).
\label{eq:prf1}
\end{align}

\noindent 
Similarly, $f_{\lceil x \rceil}(x)$ in (\ref{eq:toProve1}) can be written as 
\begin{equation}
f_{\lceil x \rceil}(x) = s[\floor{x}] - s[\ceil{x}] \big | \cdot \varphi_{\xi,\floor{x}}(x^{+}). 
\label{eq:prf2}
\end{equation}

\noindent
From the left hand relation in (\ref{eq:toProve1}) we have 
\begin{align}
\label{eq:prf0}
\rho(x,\floor{x})  > \rho(x,\ceil{x})  \stackrel{(\ref{eq:S}), (\ref{eq:constrainedSpline})}  {\longrightarrow} \varphi_{\xi,\floor{x}}(x^{+}) & > \varphi_{\xi,\ceil{x}}(x^{-})
\nonumber
\\
& \downarrow (\ref{eq:prf1}), (\ref{eq:prf2})
\nonumber
\\
f_{\ceil{x}}(x) & > f_{\floor{x}}(x).
\end{align}
(\ref{eq:toProve2}) can be proved in the same way as done for (\ref{eq:toProve1}).    
\end{proof}

\bibliographystyle{IEEEtran}
\bibliography{hbehjat_bibligraphy}

\end{document}